\definecolor{webgreen}{rgb}{0,.5,0}
\definecolor{webbrown}{rgb}{.6,0,0}
\theoremstyle{plain}
\newtheorem{theorem}{Theorem}[section]
\newtheorem{corollary}[theorem]{Corollary}
\newtheorem{conjecture}[theorem]{Conjecture}
\newtheorem{proposition}[theorem]{Proposition}
\numberwithin{equation}{section}
\numberwithin{theorem}{section}
\numberwithin{table}{section}
\numberwithin{figure}{section}
\title{Spot-Based Generations for Meta-Fibonacci Sequences}
\author{Barnaby Dalton \quad Mustazee Rahman \quad Stephen Tanny\\
\small Department of Mathematics\\
\small University of Toronto\\
\small Toronto, Ontario M5S 2E4\\
\small Canada\\
\texttt{bdalton@ca.ibm.com}\,, \texttt{mustazee.rahman@utoronto.ca}\,, \texttt{tanny@math.utoronto.ca}
}
\date{June 14, 2010}
\begin{document}

\maketitle

\begin{abstract}
For many meta-Fibonacci sequences it is possible to identify a partition of the sequence into successive intervals (sometimes called blocks) with the property that the sequence behaves ``similarly" in each block. This partition provides insights into the sequence properties. To date, for any given sequence, only ad hoc methods have been available to identify this partition. We apply a new concept - the spot-based generation sequence - to derive a general methodology for identifying this partition for a large class of meta-Fibonacci sequences. This class includes the Conolly and Conway sequences and many of their well-behaved variants, and even some highly chaotic sequences, such as Hofstadter's famous $Q$-sequence.
\end{abstract}

\section{Introduction} \label{Sec:intro}

In this paper we explore certain properties of the solutions to the recursions in two very general families, both of which have received increasing attention of late (see, for example, \cite{Grytczuk3, Rpaper, FRusCDeu} and the references cited therein). The first of these recursion families is defined as follows: for the positive integer $k>1$ and nonnegative integer parameters $a_p, b_p$, $p = 1\ldots k$,

\begin{equation}\label{introeqn1}
C(n)=\sum_{p=1}^k C(n-a_p-C(n-b_p)).
\end{equation} We often abbreviate a recursion in this family by $(a_1, b_1, a_2,b_2,..., a_k, b_k)$. The second family of recursions is defined by

\begin{equation}\label{introeqn2}
A(n)=A(n-A^k(n-1))+A(A^k(n-1))
\end{equation} where $k>0$ and $A^k(n)$ means a $k$-fold composition of the function $A$. For convenience, in our notation for both recursion families we suppress the parameter $k$ from the variable name. It will be evident from the context when a specific value of $k$ is intended.

Recursions $(\ref{introeqn1})$ and $(\ref{introeqn2})$ are examples of so-called meta-Fibonacci recursions, which refers to the ``self-referencing" nature of these recursions. An integer sequence is a meta-Fibonacci sequence if it is a solution to a meta-Fibonacci recursion.
Many well-known meta-Fibonacci recursions, with specified initial conditions, are special cases of the above two recursion families. Examples of $(\ref{introeqn1})$ include: Hofstadter's $Q$-recursion $(0,1,0,2)$ \cite{GEB, Guy}, the Conolly recursion $(0,1,1,2)$ \cite{Conolly, Tanny}, and the celebrated V-recursion $(0,1,0,4)$ \cite{BKT}. Two special cases of $(\ref{introeqn2})$ have been examined in detail. For $k=1$ this is the meta-Fibonacci recursion variously attributed to Conway, Hofstadter and Newman (see \cite{Vakil_Tal, Mallows, Newman} for more on this), while the case $k=2$ is explored in \cite{Grytczuk3}.

For the last four of these examples (that is, excluding the $Q$-recursion), the solution with initial conditions all set to 1 is completely understood. In particular, each of the resulting sequences is monotonically increasing, with the difference between successive terms always 0 or 1. Following Ruskey, we call such a sequence slow-growing, or slow. For each of these meta-Fibonacci sequences, and indeed for many others (including the $Q$-sequence), it is possible to identify a partition of the domain of the sequence into successive intervals (sometimes called blocks) with the property that the sequence behaves roughly ``in the same way" in each block. See, for example, \cite{Conolly, Mallows, Tanny, BKT}, where the nature of the block structure has been characterized precisely for the slow-growing sequences mentioned above. In each case, the approach to identifying this partition and what is meant precisely by behaves ``in the same way" varies from one sequence to the next; in all cases, however, the basic idea is that there appears to be a discernible pattern in the behavior of the sequence that repeats in successive blocks. This property can also be found in meta-Fibonacci sequences with much more chaotic behavior; in \cite{Pinn}, Pinn provides considerable experimental evidence for the existence of an underlying block structure in Hofstadter's $Q$-sequence.

In this paper we introduce an approach that formalizes and unifies this heuristic notion of an underlying block structure for a meta-Fibonacci sequence that is a solution to recursion $(\ref{introeqn1})$ or $(\ref{introeqn2})$. In so doing we explicitly connect the block structure to the form of the recursion and its parameters in an intuitive way. As a result, for an arbitrary sequence defined by these recursions, we identify a partition that often appears to highlight important properties of the sequence for further consideration. Such insight into the apparent block structure of a yet unknown sequence can provide helpful guideposts for developing conjectures and proofs.\footnote{See, for example, \cite{CCT}, where block structure insights are used to help identify and formulate the appropriate approach and specific induction assumptions required to prove the behavior of a family of sequences related to (\ref{introeqn1}).}

\section{Spot-Based Generations} \label{Sec:defs}

Define a homogeneous meta-Fibonacci recursion to be any recursion of the form:
\begin{equation} \label{eqnMFib}
T(n) = \displaystyle \sum_{p=1}^k T(S_p(n, T_{<n}))\
\end{equation}
We refer to the function $S_p(n, T_{<n})$ as the $p^{th}$ spot function; it depends on the index $n$ and values of $T(j)$ for $j < n$, which we indicate by the symbol $T_{<n}$. In the homogeneous recursion (\ref{introeqn1}), the spot functions are $S_p(n, C_{<n}) = n-a_p-C(n-b_p)$ for $1\leq p\leq k$. In the homogeneous recursion (\ref{introeqn2}) there are two spot functions, namely, $S_1(n, A_{<n}) = n-A^k(n-1)$ and $S_2(n,A_{<n}) = A^k(n-1)$. For convenience and when the context is clear, we often write $S_p(n)$ in place of $S_p(n, T_{<n})$.

To ensure that $T(n)$ is defined by (\ref{eqnMFib}) for all $n$, we require that for $1\leq p\leq k$, $S_p(n)<n$ for all $n$ following the initial conditions. We assume this holds for the recursions that we discuss.\footnote{If it fails then for the smallest integer $n$ for which it fails, we say that the sequence terminates at index $n$.} For each spot function $S_p(n)$, we define a new sequence by the recursion
\begin{equation} \label{eqnspot}
M_p(n) = M_p(S_p(n)) + 1\
\end{equation}
with initial conditions $M_p(n) = 1$ for $n = 1, \ldots, r$ where $r$ is to be the same as the number of initial conditions used in the definition of $T(n)$.\footnote{In general this value of $r$ will be greater than the minimum value that is required by the specific nature of the recursion (\ref{eqnspot}); further, this minimum value can differ for different values of $p$.}

We call the sequence $M_p(n)$ the generation sequence for $T(n)$ based on spot $p$.\footnote{We often refer to $M_p(n)$ as the generation structure for $T(n)$ based on spot $p$, especially when we are considering the overall characteristics of this sequence rather than the behavior of individual terms.} For $g\geq 1$ we define the $g^{th}$ generation with respect to the $p^{th}$ spot function to be the set $M_p^{-1}(\{g\})$, which we denote $G_p(g)$. For ease of notation we may omit reference to the index $p$ in $G_p(g)$ when the index $p$ is clear from the context. The definition of $M_p(n)$ is motivated by considering index $n$ to be in the ``next generation" of its $p^{th}$ spot ancestor $S_p(n)$, which itself is a member of a previous generation with generation number $M_p(S_p(n))$. When there are two spot functions we call $M_1(n)$ the mother function and $M_2(n)$ the father function (here we adapt terminology introduced by Pinn \cite{Pinn}). We call the generation sequences that result from these spot functions the maternal and paternal generation sequences, respectively. Similarly, the $g^{th}$ generations in this case are called the $g^{th}$ maternal and paternal generation, respectively.

It follows immediately from the recursion (\ref{eqnspot}) for $M_p$ and the initial conditions that the generation sequence begins at 1 and is onto either all of the positive integers or an interval of the positive integers beginning at 1. For fixed $p$, it is often the case that the $g^{th}$ generation $G_p(g)$ is a finite interval of positive integers for all $g \geq 1$, and the generations partition the positive integers into intervals. However, this is not always the case. We discuss this, together with a variety of other issues, in the following sections where we apply our generation notion to specific meta-Fibonacci sequences.

At this point an example may be helpful. In the notation we introduced above, the Conolly sequence $(0,1,1,2)$ \cite{Conolly} is given by $C(n) = C(n-C(n-1)) + C(n-1-C(n-2))$, with initial conditions $C(1) = C(2) = 1$. It is well-known that $C(n)$ is slow, and that for each $n$, $C(n)$ equals $n$ exactly $\nu_2(2n)$ times, where $\nu_2(n)$ is the highest power of 2 that divides $n$. The behavior of $C(n)$ between successive powers of 2 provided important insights for formulating the original induction-based proofs of the properties of $C(n)$ (see \cite{Tanny}).

The maternal generation sequence for $C(n)$ is given as $M_1(n) = M_1(n-C(n-1)) + 1$ with $M_1(1) = M_1(2) = 1$.
In the next section we prove that $M_1(n)$, the maternal generation sequence of the Conolly sequence, is slow-growing, and further, that $G_1(g) = [2^{g-1}+1, 2^g]$ for all $g \geq 2$. In this case the generation structure aligns at successive powers of $2$, exactly where the natural division points for the ``frequency" function $\nu_2(2n)$ of $C(n)$ are observed to occur.

For any homogeneous meta-Fibonacci recursion, define the beginning of the $g^{th}$ generation with respect to $M_p$ by $\alpha_p(g) = \min \{n \,|\, n \in G_p(g)\}$. Similarly define the end of the $g^{th}$ generation with respect to $M_p$ by $\beta_p(m) = \max \{n \,|\, n \in G_p(g)\}$ provided it exists. When the context is clear we will drop the subscript $p$ from the notation. By definition $G_p(g) \subseteq [\alpha_p(g), \beta_p(g)]$. If $G_p(g) \neq [\alpha_p(g), \beta_p(g)]$, we say that the $g^{th}$ generation $G_p(g)$ is \textbf{fragmented}. Otherwise when $G_p(g) = [\alpha_p(g), \beta_p(g)]$ for all $g \geq 1$, we say that the generational structure with respect to $M_p$ has an \textbf{interval structure}. This is the case for the Conolly sequence above.

\section{Generation Sequences Based On Slow-Growing Spot Sequences}\label{Sec:slowGrowing}

For the recursions (\ref{introeqn1}) and (\ref{introeqn2}) the spot sequences are of the from $S(n) = n-a-T^k(n-b)$ or $S(n) = T^k(n-b)$, respectively. These spot sequences will be slow-growing if the original sequence itself is slow-growing. For this reason we turn our attention to the situation where the spot sequence $S_p(n)$ in ~(\ref{eqnMFib}) is slow-growing. In this case much can be deduced about the generational structure of $T(n)$ based on spot $p$.

\begin{theorem} \label{thm1}
For the meta-Fibonacci sequence $T(n)$ in ~(\ref{eqnMFib}), if the spot function $S_p(n)$ is slow-growing, then so is the $p^{th}$ spot-based generation sequence $M_p(n)$.
\end{theorem}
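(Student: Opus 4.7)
The plan is to prove slow growth of $M_p$ by strong induction on $n$, showing that $M_p(n+1)-M_p(n)\in\{0,1\}$ for every $n\ge 1$. Since $M_p(1)=1$ and successive differences in $\{0,1\}$ force non-decreasing behavior, this gives both halves of the slow condition simultaneously, so monotonicity need not be handled separately.

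First I would dispose of the three easy cases by inspection. If $n+1\le r$, both terms equal $1$ by the initial conditions, so the difference is $0$. At the boundary $n=r$, the recursion yields $M_p(r+1)=M_p(S_p(r+1))+1$, and because $S_p(r+1)<r+1$ forces $S_p(r+1)\le r$, the value $M_p(S_p(r+1))$ is $1$, giving a difference of exactly $1$. These two cases set up the induction and pin down the base behavior.

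The heart of the argument is the recursive region $n>r$. Here the recursion gives
\begin{equation*}
M_p(n+1)-M_p(n) \;=\; M_p(S_p(n+1))-M_p(S_p(n)).
\end{equation*}
Since $S_p$ is slow, $S_p(n+1)-S_p(n)\in\{0,1\}$. If the spot values agree, the difference on the right is $0$ and we are done. Otherwise $S_p(n+1)=S_p(n)+1$, and since $S_p(n)<n$ by the standing hypothesis that spot functions take values strictly below the current index, the inductive hypothesis applies to the pair $S_p(n),\,S_p(n)+1$, yielding a difference in $\{0,1\}$. This closes the induction.

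There is no serious obstacle: the recursion for $M_p$ is designed precisely to transport information from index $n$ to the strictly smaller index $S_p(n)$, so differences at scale $n$ are controlled by differences at scale $S_p(n)$, and the slowness of $S_p$ ensures that the indices we compare at the smaller scale are themselves adjacent. The only subtlety worth flagging is making sure the base case covers the transition out of the initial-condition block (handled above at $n=r$) and verifying that $S_p(n)\ge 1$ throughout, which is implicit in the assumption that the recursion for $T$ is well-defined past its initial conditions.
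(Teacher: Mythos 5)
Your proposal is correct and follows essentially the same route as the paper's proof: induction on $n$, using the recursion to reduce $\Delta M_p(n)$ to $M_p(S_p(n+1))-M_p(S_p(n))$, then invoking slowness of $S_p$ and the inductive hypothesis at the smaller index $S_p(n)<n$. Your treatment of the base cases (the initial-condition block and the transition at $n=r$) is slightly more explicit than the paper's, but the argument is the same.
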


\begin{proof} The proof is by induction on $n$. For the base case, note that by the initial conditions in (\ref{eqnspot}),
$M(n) = 1$ for $ 1 \leq n \leq r$. Let $\Delta M(n) = M(n+1) - M(n)$; note that if
$r > 1$, then $\Delta M(1) = 0$. If $r=1$ then for $T(2)$ to be well-defined we must have $S_p(2) = 1$, from which it follows that $\Delta M(1) = M(2) - M(1) = M(S_p(2)) - M(S_p(1)) = M(1)-M(1)=0$. Thus, in all cases, $\Delta M(1) \in \{0,1\}$.

For $n > 1$, assume that $\Delta M(k) \in \{0,1\}$ for all $k < n$. Since $S_p(i)$ is slow-growing, observe that $M(n+1) = M(S_p(n+1)) + 1 = M(S_p(n) + t) + 1$ where $t = 0$ or $t = 1$. Since $T(n)$ is well-defined, we must have that $S_p(n) + t \leq n$. Thus, by the induction assumption, $M(S_p(n) + t) = M(S_p(n)) + j$ where $j \in \{0,1\}$. It follows that $M(n+1) = [M(S_p(n)) + 1] + j = M(n) + j$ from which we get that $\Delta M(n) = j \in \{0,1\}$. This completes the induction.
\end{proof}

\begin{corollary} \label{cor}
If $S_p(n)$ is slow-growing, then the generation sequence of $T(n)$ based on spot $p$ is an interval structure.
Further, in this case, for $g \geq 1$, $\beta_p(g) = \alpha_p(g+1) - 1$.
\end{corollary}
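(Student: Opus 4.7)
The plan is to deduce both claims as immediate structural consequences of the slow-growth of $M_p$ furnished by Theorem \ref{thm1}. Since $\Delta M_p(n) \in \{0,1\}$ for all $n$, the sequence $M_p$ is in particular non-decreasing. Combined with the fact (noted in Section \ref{Sec:defs}) that the image of $M_p$ is either all of $\N$ or an initial segment of $\N$ starting at $1$, this monotonicity is essentially all that is needed; the rest is bookkeeping.

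First I would establish the interval structure. Fix $g \geq 1$ and suppose $G_p(g) = M_p^{-1}(\{g\})$ is non-empty, with $\alpha = \alpha_p(g)$ and $\beta = \beta_p(g)$ (assuming for the moment that $\beta$ exists; I will return to this). For any integer $n$ with $\alpha \leq n \leq \beta$, monotonicity forces $g = M_p(\alpha) \leq M_p(n) \leq M_p(\beta) = g$, so $M_p(n) = g$ and hence $n \in G_p(g)$. This shows $G_p(g) = [\alpha_p(g), \beta_p(g)]$, which is exactly the interval structure property.

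Next I would establish the adjacency identity $\beta_p(g) = \alpha_p(g+1) - 1$. The argument again uses only that successive differences lie in $\{0,1\}$: by definition of $\alpha_p(g+1)$, we have $M_p(\alpha_p(g+1)) = g+1$ and $M_p(\alpha_p(g+1) - 1) \leq g$ (provided $\alpha_p(g+1) > 1$, which holds since $M_p(1) = 1 < g+1$). But $M_p(\alpha_p(g+1)) - M_p(\alpha_p(g+1) - 1) \in \{0,1\}$ by Theorem \ref{thm1}, so $M_p(\alpha_p(g+1) - 1) = g$, putting $\alpha_p(g+1) - 1$ into $G_p(g)$. Since it is by construction the largest index with this property, it equals $\beta_p(g)$; in particular $\beta_p(g)$ exists whenever $g+1$ is in the image of $M_p$. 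Conversely, if $\beta_p(g)$ exists then $M_p(\beta_p(g) + 1) > g$, and by slow growth this value must equal $g+1$, so $\beta_p(g)+1 = \alpha_p(g+1)$.

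There is no substantive obstacle here once Theorem \ref{thm1} is in hand; the only subtle point is ensuring that one does not accidentally assert the existence of $\beta_p(g)$ in situations where $M_p$ stabilizes at $g$ (which cannot occur when $S_p$ is slow-growing and the image of $M_p$ is infinite, but could in degenerate truncated cases). I would handle this by phrasing the adjacency identity as a statement valid whenever both sides are defined, which is automatic for every $g$ such that generation $g+1$ is non-empty.
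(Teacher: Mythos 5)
Your proposal is correct and follows essentially the same route as the paper: both arguments rest entirely on the slow-growth of $M_p$ from Theorem \ref{thm1}, using the resulting monotonicity to get the interval structure and the unit-step property to get $\beta_p(g) = \alpha_p(g+1)-1$, while flagging the degenerate case where $M_p$ stabilizes and a generation is infinite. You simply spell out the bookkeeping that the paper's very terse proof leaves implicit.
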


\begin{proof} As $M_p(n)$ is slow-growing, for any $g \geq 1$ there is a minimum index $\alpha_p(g)$ and a maximum index $\beta_p(g)$ such that $M_p(\alpha_p(g)) = M_p(\beta_p(g)) = g$. If $S_p(n)$ becomes constant then for some $g*$ we have $\beta_p(g*) = \infty$, and thus there are only a finite number of generations. Otherwise, for every $g \geq 1$, $G_p(g) = [\alpha_p(g), \beta_p(g)]$ and $\beta_p(g) = \alpha_p(g+1) - 1$.
\end{proof}

For a slow-growing spot sequence $S_p(n)$, there is an elegant relation between this spot sequence and the generation sequence $M_p(n)$ that it induces. By the definition of $\alpha_p(g+1)$, we have that $g+1 = M_p(\alpha_p(g+1)) = M_p(S_p(\alpha_p(g+1))) + 1$. So, $S_p(\alpha_p(g+1)) \geq \alpha_p(g)$ since $\alpha_p(g)$ is the beginning of generation $g$. To see that equality holds, assume the contrary, namely, $S_p(\alpha_p(g+1)) > \alpha_p(g)$. Since $S_p(n)$ is slow-growing, there exists $\gamma < \alpha_p(g+1)$ such that $S_p(\gamma) = \alpha_p(g)$. But this implies that $M_p(\gamma) = M_p(S_p(\gamma))+1 = M_p(\alpha_p(g)) + 1 = g+1$, which contradicts the definition of $\alpha_p(g+1)$. Thus, $S_p(\alpha_p(g+1)) = \alpha_p(g)$.

Since $S_p(n)$ is slow, $M_p(n)$ is also slow. Thus, for every $g \geq1,\; \beta_p(g) = \alpha_p(g+1) - 1$. By what we have just shown $S_p(\alpha_p(g+2)) = \alpha_p(g+1)$, so using the fact that $S_p(n)$ is slow, we have that $S_p(\beta_p(g+1))\in \{\alpha_p(g+1) - 1, \alpha_p(g+1)\}$. However, $S_p(\beta_p(g+1)) = \alpha_p(g+1)$ would imply that $M_p(\beta_p(g+1)) = M_p(\alpha_p(g+1)) + 1 = g+2$. This is a contradiction to the definition of $\beta_p(g+1)$. Thus, $S_p(\beta_p(g+1)) = \alpha_p(g+1) - 1 = \beta_p(g)$, and we have proved the following result:

\begin{theorem} \label{thm2}
Suppose that $S_p(n)$ is a slow-growing spot of $T(n)$. Then for every $g>0$, $S_p(n)$ maps the $(g+1)^{th}$ generation onto the $g^{th}$ generation. That is, $S_p(\alpha_p(g+1)) = \alpha_p(g)$ and $S_p(\beta_p(g+1)) = \beta_p(g)$.
\end{theorem}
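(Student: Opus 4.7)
The plan is to prove the two equations in sequence: first establish $S_p(\alpha_p(g+1)) = \alpha_p(g)$, and then bootstrap from this identity (applied at $g+1$) together with Corollary~\ref{cor} to pin down $S_p(\beta_p(g+1))$. Both arguments use only the defining relation $M_p(n) = M_p(S_p(n))+1$, the slow-growth hypothesis on $S_p$, and the slow-growth of $M_p$ supplied by Theorem~\ref{thm1}.

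For the $\alpha$ identity, one direction is almost immediate: unwinding the recursion gives $g = M_p(\alpha_p(g+1)) - 1 = M_p(S_p(\alpha_p(g+1)))$, so $S_p(\alpha_p(g+1))$ belongs to the $g$-th generation and is therefore at least $\alpha_p(g)$. For the reverse inequality I would argue by contradiction. If $S_p(\alpha_p(g+1)) > \alpha_p(g)$, then because $S_p$ moves in $\{0,1\}$-steps and satisfies $S_p(n) < n$, there is some $\gamma < \alpha_p(g+1)$ with $S_p(\gamma) = \alpha_p(g)$ exactly; this forces $M_p(\gamma) = g+1$, contradicting the minimality of $\alpha_p(g+1)$.

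For the $\beta$ identity I would apply the $\alpha$ identity already obtained, but at $g+1$, to get $S_p(\alpha_p(g+2)) = \alpha_p(g+1)$. Since $\beta_p(g+1) = \alpha_p(g+2) - 1$ by Corollary~\ref{cor} and $S_p$ is slow, the value $S_p(\beta_p(g+1))$ must be either $\alpha_p(g+1)$ or $\alpha_p(g+1) - 1$. The first alternative would yield $M_p(\beta_p(g+1)) = g+2$, contradicting $\beta_p(g+1) \in G_p(g+1)$, so the second must hold and equals $\beta_p(g)$.

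The only delicate point is the existence of the intermediate index $\gamma$ in the $\alpha$ argument. It rests on an intermediate-value property for slow-growing sequences, which in turn uses the constraint $S_p(n) < n$ (so that $S_p$ has reached values at or below $\alpha_p(g)$ at earlier indices) together with unit-step growth. This is the one place where the slow-growth hypothesis is used essentially; elsewhere a weakly increasing $S_p$ would suffice. Given this, the remainder is routine bookkeeping in the generation structure.
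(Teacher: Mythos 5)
Your proposal is correct and follows essentially the same route as the paper's argument: the same $\geq$ direction read off from $M_p(\alpha_p(g+1)) = M_p(S_p(\alpha_p(g+1)))+1$, the same intermediate-value contradiction producing $\gamma < \alpha_p(g+1)$ with $S_p(\gamma)=\alpha_p(g)$, and the same two-case elimination for $S_p(\beta_p(g+1))$ using $\beta_p(g+1)=\alpha_p(g+2)-1$. (One small quibble with your closing aside: the unit-step property of $S_p$ is also needed in the $\beta$ argument, to conclude $S_p(\alpha_p(g+2)-1)\in\{\alpha_p(g+1)-1,\,\alpha_p(g+1)\}$ from $S_p(\alpha_p(g+2))=\alpha_p(g+1)$, so its essential use is not confined to the intermediate-value step.)
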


Any slow-growing spot $S_p(n)$ of $T(n)$ specifies a generation structure that is uniquely determined by the sequence of generation interval starting points $\alpha_p(g)$ for all positive integers $g$. If the recursion for $T(n)$ has $r$ initial conditions, then the starting points of the generations are uniquely determined by the property that $\alpha_p(2) = r + 1$ and for all subsequent $\alpha_p(g)\; g > 2$, $\alpha_p(g)$ is the \emph{smallest} number with the property that $S_p(\alpha_p(g+1)) = \alpha_p(g)$. We use this fact extensively in what follows, where we compute the maternal generation structure for several well-known slow-growing sequences. In so doing we show how our spot-based maternal generations are essentially congruent to the block structures for these sequences that are identified in an ad hoc way in the literature.

We begin with the Conway sequence and several of its variants. The Conway sequence is defined by $A(n) = A(n-A(n-1)) + A(A(n-1))$, with $A(1) = A(2) = 1$. The graph of the Conway sequence in Figure \ref{fig:Conway} provides striking evidence of what is usually meant by a block structure: a series of arcs between successive powers of 2 indicating that the behavior of the sequence is essentially the same on these intervals of the domain.

\begin{figure}[htpb]
\begin{center}
\includegraphics[scale=0.7]{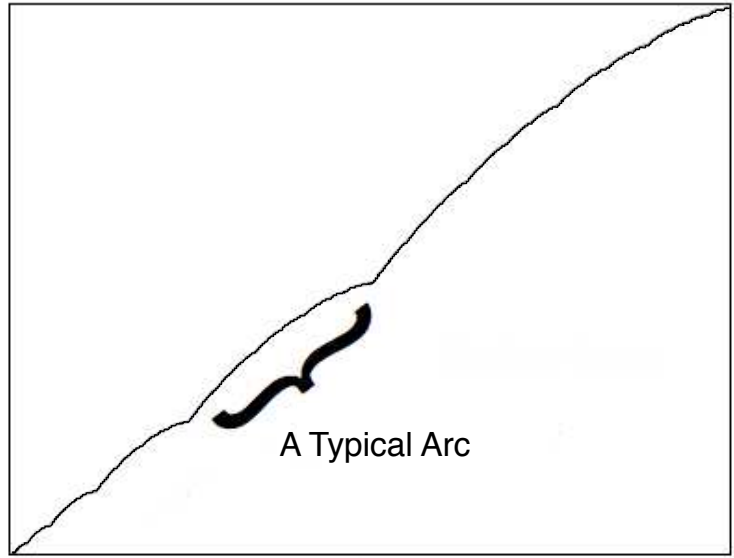}
\caption{Graph of Conway sequence $A(n)$ for $1 \leq n \leq 1024$.} \label{fig:Conway}
\end{center}
\end{figure}

The interval $[2^{m}, 2^{m+1})$ has been termed the $m^{th}$ ``octave" of the sequence (\cite{Conolly}). Various patterns in the Conway sequence have been shown to persist from one octave to the next. For example, $A(2^m) = 2^{m-1}$ for $m \geq 1$, that is, the beginning of each octave is mapped to the beginning of the previous one. Also, for $m \geq 2$, $A(n) = 2^{m-1}$ for exactly the last $m$ indices in the $m^{th}$ octave, and $A(n) \geq \frac{n}{2}$ with equality only when $n$ is a power of 2.

It is readily confirmed that the maternal generation structure of $A(n)$ conforms naturally to these octaves.

\begin{table}[!htp]
\fontsize{9}{9}\selectfont
\caption{First 10 maternal generations of the Conway Sequence.} \label{tbl:gen_Conway1}
\center{
\begin{tabular}{|c | c c c c c c c c c c|}\hline
$g$&1&2&3&4&5&6&7&8&9&10\\\hline
$G_1(g)$&[1, 2]&[3, 4]&[5, 8]&[9, 16]&[17, 32]&[33, 64]&[65, 128]&[129, 256]&[257, 512]&[513, 1024]\\\hline
\end{tabular}
}
\end{table}

\begin{proposition}
For the Conway sequence, $\alpha_1(g) = 2^{g-1} + 1$ for $g > 1$.
\end{proposition}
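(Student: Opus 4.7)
The plan is a straightforward induction on $g$, using the characterization of generation starts given just before the proposition together with three well-known properties of the Conway sequence that are already recalled in the excerpt: that $A(n)$ is slow-growing (so $S_1(n) = n - A(n-1)$ is slow-growing and the theory of Theorem \ref{thm2} applies), that $A(2^m) = 2^{m-1}$ for $m \geq 1$, and that $A(n) \geq n/2$.

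Base case: the Conway recursion has $r = 2$ initial conditions, so the excerpt gives $\alpha_1(2) = r+1 = 3 = 2^1 + 1$ directly.

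Inductive step: assume $\alpha_1(g) = 2^{g-1}+1$ for some $g \geq 2$. By the characterization recalled after Theorem \ref{thm2}, $\alpha_1(g+1)$ is the smallest index $n$ for which $S_1(n) = \alpha_1(g)$, i.e.\ for which
\[
A(n-1) \;=\; n - 2^{g-1} - 1.
\]
For existence, I would try $n = 2^g + 1$: using $A(2^g) = 2^{g-1}$, the right-hand side equals $2^{g-1}$ and the left-hand side equals $A(2^g) = 2^{g-1}$, so this $n$ works. For minimality, suppose $n \leq 2^g$. Then $A(n-1) \geq (n-1)/2$ by the inequality $A(m) \geq m/2$, so
\[
S_1(n) \;=\; n - A(n-1) \;\leq\; n - \tfrac{n-1}{2} \;=\; \tfrac{n+1}{2} \;\leq\; \tfrac{2^g+1}{2}.
\]
Because $S_1(n)$ is an integer, this forces $S_1(n) \leq 2^{g-1} < 2^{g-1}+1 = \alpha_1(g)$, so no $n \leq 2^g$ can map under $S_1$ to $\alpha_1(g)$. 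Hence $\alpha_1(g+1) = 2^g + 1$, completing the induction.

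The main obstacle is not really an obstacle at all: the whole argument rides on the three Conway-sequence facts listed above, and they are stated in the excerpt as already known. If one wanted the proof to be self-contained, the hard part would instead be deriving the lower bound $A(n) \geq n/2$ and the identity $A(2^g) = 2^{g-1}$, but since the excerpt invokes the block-structure literature for these, I would simply cite them (e.g.\ \cite{Mallows}) and keep the argument at the level above.
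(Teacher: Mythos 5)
Your proof is correct, and it follows the same inductive skeleton as the paper's, but it handles the minimality half of the step differently. Both arguments obtain the new start point the same way: $A(2^g) = 2^{g-1}$ gives $S_1(2^g+1) = 2^g + 1 - A(2^g) = 2^{g-1}+1 = \alpha_1(g)$, hence index $2^g+1$ lies in generation $g+1$. For the claim that no earlier index does, the paper evaluates $M(2^g) = M(2^g - A(2^g-1)) + 1 = M(2^{g-1}) + 1 = g$ using the companion identity $A(2^g - 1) = 2^{g-1}$, and then invokes the slowness of $M$ (Theorem \ref{thm1}) to conclude that $g+1$ first occurs at $2^g+1$. You instead rule out every $n \le 2^g$ at once via the global inequality $A(m) \ge m/2$, which forces $S_1(n) \le (n+1)/2 \le 2^{g-1} < \alpha_1(g)$. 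Your route needs one fewer ``local'' fact about $A$ near powers of $2$, but it imports the global lower bound $A(m) \ge m/2$, which the paper quotes in its discussion of octaves but never actually uses in the proof; the paper's route stays entirely within the two values $A(2^g) = A(2^g-1) = 2^{g-1}$ and the already-established slowness of $M$. Your appeal to the characterization of $\alpha_1(g+1)$ as the least $S_1$-preimage of $\alpha_1(g)$ is legitimate, since $S_1(n) = n - A(n-1)$ is slow whenever $A$ is, so Theorem \ref{thm2} and the remarks following it apply. Both arguments are sound; the difference is only in which known properties of the Conway sequence each chooses to lean on.
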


\begin{proof}We proceed by induction on $g$. The base case is clear from Table \ref{tbl:gen_Conway1}. We use the fact that for $g>0$, $A(2^g) = A(2^g - 1) = 2^{g-1}$ (\cite{Vakil_Tal}). Assume the proposition up to generation $g$. For $g+1$ we have that $M(2^g + 1) = M(2^g + 1 - A(2^g)) + 1$. As $A(2^g) = 2^{g-1}$,
we get $2^g + 1 - A(2^g) = 2^{g-1} +1$. By the induction hypothesis we have that $\alpha_1(g) = 2^{g-1} + 1$, and so $M(2^g + 1) = M(\alpha_1(g)) + 1 = g+1$. On the other hand, $M(2^g) = M(2^g - A(2^g-1)) + 1$, and since $A(2^g - 1) = 2^{g-1}$, this implies $2^g - A(2^g-1) = 2^{g-1} = \alpha_1(g) - 1$. Hence, $M(2^g) = M(\alpha_1(g) - 1) + 1 = g-1 +1 = g$. Since $M(n)$ is slow, it follows that $M(2^g + 1)$ is the first occurrence of $g+1$, so $\alpha_1(g+1) = 2^g + 1$. This completes the induction. \end{proof}

Next we show that an analogous result holds for the Newman-Conway sequences (see \cite{Newman}). These sequences are defined by $f_r(n) = f_r(n-f_r(n-1)) + f_r(f_r(n-1))$, with initial conditions $f_r(i) = 1$ for $1 \leq i \leq r+1$ and $r \geq 1$. Note that the Conway sequence corresponds to $r=1$. For any fixed $r>1$, the following holds: (1) the sequence $f_r(n)$ is slow-growing; (2) like that of the Conway sequence, the graph of $f_r(n)$ consists of successive arcs that begin and end at the ``Fibonacci-type" numbers $E_n$ defined by $E_n = E_{n-1} + E_{n-r}$, with initial conditions $E_i = 1$ for $1 \leq i \leq r$; (3) these arcs identify a natural partition of the domain at the points $E_n$; (4) for $n>r, f_r(E_n) = f_r(E_n - 1) = E_{n-r}$ (see (\cite{Kleitman})).

As is the case for the Conway sequence, the maternal generations for the Newman-Conway sequences create essentially this same partition of the domain. More precisely, for $g > 1$, the maternal generation begins at $\alpha_1(g) = E_{2r + g - 2} + 1$. Clearly this holds for the second generation, which starts at $r + 2 = E_{2r} + 1$. By definition, $S_1(E_n + 1) = E_n + 1 - f_r(E_n)$. Since $f_r(E_n) = E_{n-r}$ for $n > r$, we have that $S_1(E_n + 1) = E_n - E_{n-r} + 1 = E_{n-1} + 1$ by the recursion for $E_n$. But $f_r(E_n - 1) = E_{n-r}$ for $n > r$, so we have that $S_1(E_n) = E_n - f_r(E_n - 1) = E_n - E_{n-r} = E_{n-1}$. By Theorem \ref{thm2}, $S(\alpha_1(g+1)) = \alpha_1(g)$ and $\alpha_1(g+1)$ is the smallest number with this property. Since $\alpha_1(2) = E_{2r} + 1,\;S(E_n) = E_{n-1}$ and $S(E_n +1) > E_n$, we can use induction together with the discussion following Theorem \ref{thm2} to deduce that $\alpha_1(g) = E_{2r + g -2} + 1$.

We conclude our consideration of Conway sequence variants with the sequences defined by (\ref{introeqn2}) for $k>1$, and with initial conditions $A(1) = A(2) = 1$ (see \cite{Grytczuk3}). For $k=2$, Grytczuk proved that the resulting sequence is slow-growing. He also showed that the last occurrence of the Fibonacci number $F_n$ in the sequence occurs at position $F_{n+1}$. This prompts Grytczuk to observe that this sequence has a clearly identifiable block structure, in which the Fibonacci numbers play a prominent role. He states: ``this suggests to divide $A(n)$ into segments of the form $[A(F_n+1), A(F_n+2),...,A(F_{n+1})]$" \cite[p.\ 149]{Grytczuk3}.
Once again, just as we found for the Conway and Newman-Conway sequences, the maternal generation structure for Grytczuk's sequence matches the natural block structure that he identified. In fact, even more is true: an analogous result holds for $(\ref{introeqn2})$ with any positive $k>1$ and the same initial conditions $A(1) = A(2) = 1$.

We now outline the argument for this, which follows the same lines as the one given above\footnote{The interested reader can contact us for additional details.}. For any fixed $k>1$ the sequence $A(n)$ defined by (\ref{introeqn2}) is slow-growing. For $n>k$, define $E_n$ by $E_n = E_{n-1} + E_{n-k}$, with initial conditions $E_i = 1$ for $1 \leq i \leq k$. Then for $n > k$, we can show that $A(E_{n+1}) = E_n$, $E_{n+1}$ marks the last occurrence of the value $E_n$, and $A^k(E_n-1) = E_{n-k}$. It follows that $A(n)$ has a natural block structure whose division points are at the points $E_n$.

Using these properties we will show that the maternal generations of $A(n)$ based on the spot $n - A^k(n-1)$ have an interval structure, where for $g > 1$ the $g^{th}$ generation begins at $\alpha_1(g) = E_{k+g-1} + 1$. As such, the maternal generation structure coincides with the block structure for $A(n)$ that we just described.
To see this, note first that $\alpha_1(2) = 3 = E_{k+1} + 1$. For $g > 2$, by using the aforementioned properties of $A(n)$, we get
$$S_1(E_{k+g-1} + 1) = E_{k+g-1} + 1 - A^k(E_{k+g-1}) = E_{k+g-1} - E_{g-1} + 1 = E_{k+g-2} + 1$$
Also, we have that
$$S_1(E_{k+g-1}) = E_{k+g-1} - A^k(E_{k+g-1} - 1) = E_{k+g-1} - E_{g-1} = E_{k+g-2}$$
By the remarks immediately following Theorem \ref{thm2}, these properties uniquely determine the start points for the maternal generations. Thus, for $g>2$, we have that $\alpha_1(g) = E_{k+g-1}$.

In our final example we show, as asserted in Section \ref{Sec:defs}, that the maternal generation sequence of the Conolly sequence $C(n)$ discussed in Section \ref{Sec:defs} is slow-growing. Further, for $g>1$, $G_1(g) = [2^{g-1}+1, 2^g]$, that is, the $g^{th}$ maternal generation is a shift of $1$ from the $g^{th}$ block $[2^{g-1}, 2^g -1]$ identified by Conolly \cite{Conolly}.

We want to show that for $g>1$ we have $\alpha_1(g) = 2^{g-1} + 1$. Similar to our earlier arguments, since $C(n) = C(n-C(n-1)) + C(n-1-C(n-2))$, it suffices to verify that $S_1(2^{g-1} + 1) = 2^{g-2} + 1$ and $S_1(2^{g-1}) = 2^{g-2}$. Since $S_1(n) = n-C(n-1)$, these are equivalent to $C(2^g) = C(2^g - 1) = 2^{g-1}$, which is a well known property of the Conolly sequence (see, for example, \cite{Tanny}). This completes the proof.

\section{Generational Structure For Selected Non-Slow Sequences} \label{Sec:nonslow}
Based on our initial experimental evidence, we believe that an analysis of generation structures can provide useful insights for sequences with more erratic behavior than that of the slow-growing sequences addressed in the previous section. For example, a sequence of interest is $\mu(n)$ generated by the recursion $(1,2,2,1)$ with three initial conditions all equal to 1:

\begin{equation} \label{seq:mu}
\mu(n) = \mu(n-1-\mu(n-2)) + \mu(n-2-\mu(n-1)); \quad \mu(1) = \mu(2) = \mu(3) = 1
\end{equation}

Table \ref{tbl:mu} contains the first 50 values of $\mu(n)$. The sequence $\mu(n)$ is neither slow-growing nor monotonic. It is not even known whether $\mu(n)$ is defined for all positive integers $n$, that is, whether $n-2-\mu(n-1) > 0$ for all integers $n \geq 4$.

\begin{table}[!htpb]
\fontsize{10}{10}\selectfont
\caption{First 50 terms of $\mu(n)$}\label{tbl:mu}
\center{
\begin{tabular}[t]{ |l| r r r r r r r r r r|}\hline
\multirow{2}{*}{}&&&&&$n$&&&&&\\
&1&2&3&4&5&6&7&8&9&10\\\hline
$\mu(n+0)$&1&1&1&2&2&2&3&3&4&4\\
$\mu(n+10)$&4&5&5&6&7&7&7&8&8&8\\
$\mu(n+20)$&9&9&10&11&11&11&13&12&14&13\\
$\mu(n+30)$&14&15&15&15&16&16&16&17&17&18\\
$\mu(n+40)$&19&19&19&21&20&22&21&22&24&24\\\hline
\end{tabular}
}
\end{table}

At the same time, an examination of the first $10^6$ terms of $\mu(n)$ indicates that the sequence appears to have many regularities. For example, $\mu(n)$ hits every power of $2$ exactly three times, and for each power of $2$ the three occurrences are for consecutive arguments. Each of these runs of $2^k$ is also preceded by at least two consecutive occurrences of $2^k-1$ and succeeded by precisely two occurrences of $2^k+1$ (see Table \ref{tbl:mu} for examples of this behavior). Additional regularities are evident in the graph of $\mu(n)$: the graph alternately widens and narrows, and the general appearance on each interval, defined by successive narrowing of the sequence, is similar (see Figure \ref{fig:mu}). The narrowing of $\mu(n)$ occurs where the sequence takes on the values of a power of 2. The lengths of these successive intervals doubles as does the amplitude of the variation about the trend line of the graph.

\begin{figure}[!htb]
\begin{center}
\includegraphics[scale=.25]{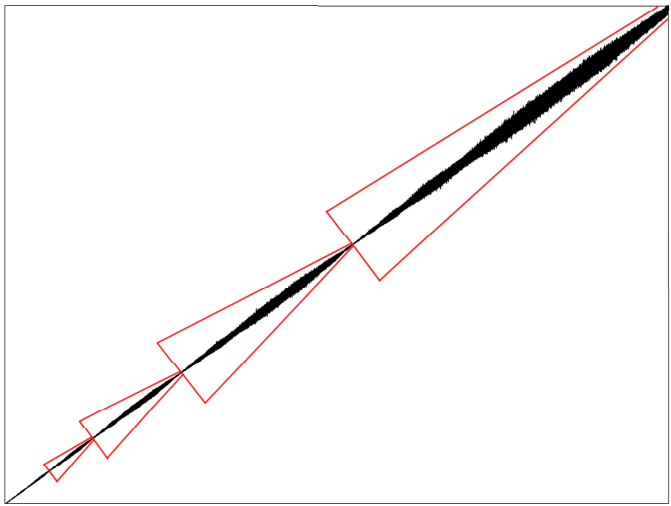}
\caption{Graph of $\mu(n)$ for $1 \leq n \leq 10^6$.} \label{fig:mu}
\end{center}
\end{figure}

It is fascinating that the maternal generation structure for the first $10^6$ terms of $\mu(n)$ corresponds precisely with the intervals identified from the graph. Based on our experimental evidence we conjecture the following:
\begin{conjecture}The sequence $\mu(n)$ is defined for all positive integers $n$. The maternal generation sequence of $\mu(n)$ is slow-growing.
For each $g \geq 3$ the $g^{th}$ maternal generation begins at index $2^{g-1} + g$, which is the first occurrence of $2^{g-2} + 1$ in $\mu(n)$, and ends
at index $2^g + g$, which is the last occurrence of $2^{g-1}$ in $\mu(n)$.\end{conjecture}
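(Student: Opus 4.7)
The plan is to establish the conjecture by a joint strong induction on the generation number $g$, simultaneously tracking (a) well-definedness of $\mu(n)$ through the end of generation $g+1$; (b) the identities $\alpha_1(g) = 2^{g-1} + g$ and $\beta_1(g) = 2^g + g$; (c) slow-growth of $M_1$ on $[1, 2^g + g]$; and (d) a precise internal description of $\mu$ restricted to each generation block $[\alpha_1(g), \beta_1(g)]$. The base case $g = 3$ can be verified directly from Table \ref{tbl:mu}.

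First I would posit the structural description in (d), extracted from the experimental regularities already noted after (\ref{seq:mu}): within the $g$-th generation, $\mu$ sweeps from $2^{g-2}+1$ to $2^{g-1}+1$ with a prescribed pattern of equalities and small reversals, attains the value $2^{g-1}$ at exactly the three consecutive indices ending at $\beta_1(g) = 2^g+g$, and is preceded by at least two consecutive occurrences of $2^{g-1}-1$. Second, using this description I would verify the endpoint computations: $S_1(\alpha_1(g+1)) = (2^g+g+1) - 1 - \mu(2^g+g-1) = 2^{g-1}+g = \alpha_1(g)$ and $S_1(\beta_1(g+1)) = (2^{g+1}+g+1) - 1 - \mu(2^{g+1}+g-1) = 2^g + g = \beta_1(g)$, so $M_1$ takes the value $g+1$ at both endpoints of the $(g+1)$-th generation. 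Third, I would show that $S_1$ sends every index in $[\alpha_1(g+1),\beta_1(g+1)]$ into $[\alpha_1(g),\beta_1(g)]$, which combined with the inductive fact that $M_1 \equiv g$ on that range forces $M_1 \equiv g+1$ on the new block (hence slow-growth of $M_1$ at the new level). Fourth, I would derive the internal description of $\mu$ on generation $g+1$ by unfolding one step of $\mu(n) = \mu(n-1-\mu(n-2)) + \mu(n-2-\mu(n-1))$ and using (d) at level $g$ to evaluate each summand, thereby closing the induction. Well-definedness on generation $g+1$ then follows from the uniform upper bound $\mu(n) \le 2^g + 1$ supplied by the structural description.

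The main obstacle is clearly item (d): the internal structure of $\mu$ on a generation block is genuinely intricate, since $\mu$ is neither monotone nor slow-growing, and the alternating widening/narrowing shape visible in Figure \ref{fig:mu} must be tracked tightly enough to evaluate both $\mu(n-2)$ and $\mu(n-1)$ at every $n$ in the next block. My expectation is that success will hinge on identifying a clean self-similar relation expressing $\mu$ restricted to the $(g+1)$-th generation as a concrete transformation (essentially a doubling/shift combined with a reflection) of $\mu$ restricted to the $g$-th generation, and then showing that the recursion preserves this relation. Unlike the slow-growing examples treated in Section \ref{Sec:slowGrowing}, Theorem \ref{thm1} cannot be invoked here, because the spot $S_1(n) = n - 1 - \mu(n-2)$ inherits the non-slow fluctuations of $\mu$ itself; the slow-growth of $M_1$ must instead be extracted as a consequence of the structural description. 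This non-triviality is presumably why the statement appears as a conjecture rather than a theorem.
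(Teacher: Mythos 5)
This statement is stated in the paper as a \emph{conjecture}, supported only by computation of the first $10^6$ terms; the paper offers no proof, so there is nothing to compare your plan against except the paper's own implicit admission that the result is open. Judged on its own merits, your proposal is a reasonable roadmap but it is not a proof: the entire mathematical content is concentrated in your item (d), the self-similar structural description of $\mu$ on each generation block, and you posit this description rather than establish it. Every other step in your outline (the endpoint computations, the containment of spot images, the well-definedness bound) is routine bookkeeping \emph{conditional} on (d). Since $\mu$ is neither slow nor monotone, none of the machinery of Section \ref{Sec:slowGrowing} (Theorem \ref{thm1}, Corollary \ref{cor}, Theorem \ref{thm2}) applies, and no substitute lemma is supplied. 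You acknowledge this yourself in your final paragraph, which is honest, but it means the proposal is a research plan, not an argument.

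One further gap worth naming concretely: in your third step you show that $S_1$ maps $[\alpha_1(g+1),\beta_1(g+1)]$ \emph{into} $[\alpha_1(g),\beta_1(g)]$ and conclude $M_1 \equiv g+1$ there. But $G_1(g+1)$ is defined as the preimage $M_1^{-1}(\{g+1\})$, so to conclude that the $(g+1)^{th}$ generation is \emph{exactly} this interval you also need the converse: no index outside $[\alpha_1(g+1),\beta_1(g+1)]$ has its spot landing in $[\alpha_1(g),\beta_1(g)]$. For a slow-growing spot this comes free from monotonicity (this is precisely how the discussion after Corollary \ref{cor} proceeds), but here $S_1(n) = n-1-\mu(n-2)$ inherits the reversals of $\mu$ visible in Table \ref{tbl:mu} (e.g., $\mu(26)=13 > \mu(27)=12$), so $S_1$ can in principle jump backward into an earlier block. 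Ruling this out requires quantitative control on how far $\mu$ can dip below its local trend, which again can only come from the unproven item (d). Your endpoint computations are arithmetically correct given the conjectured three-consecutive-occurrences property of the powers of $2$, but that property is itself part of what must be proved.
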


We conclude by describing some intriguing experimental findings concerning the maternal generation structure for Hofstadter's famous $Q$-sequence (0,1,0,2) \cite{GEB}. First we set the stage. It is well known (for example, see \cite{Pinn}) that $Q(n)$ exhibits the following repetitive behavior about its underlying trend line $y=n/2$: a period of relatively large oscillations, sometimes initiated by a large ``spike", followed by gradually decreasing oscillation that tapers to a portion of relative quiet with very small differences, and then the process repeats (see Figure \ref{fig:Hof}). In \cite{Pinn} Pinn locates the first 20 ``transition points" where the large oscillations in $Q(n)-n/2$ recur following a period of relative calm (see the right hand side of Table \ref{tbl:Pinn_mat_comp}). He identifies these points with the start points of the intervals that partition the domain into an underlying block structure for the $Q$-sequence (he terms these blocks ``generations"). Pinn finds the first eleven of these transition points ``by eye" from the graph of $Q(n)$.\footnote{The transition points double for generations 2 through 10, as does the value of $Q(n)$ at each of these 9 points, these values being the first occurrence of $2,\ldots, 2^9$ respectively. This pattern fails for the start point of the $11^{th}$ Pinn generation.} For subsequent generations, Pinn observes that ``the onset of the new generations is a little less well defined"\cite[p.\ 8]{Pinn}. By applying certain statistical approaches, Pinn concludes that a good estimate for the start point for generation $g>11$ is $\lfloor 2^{g-1/2} \rfloor$.\footnote{Observe the typographical error in \cite[p.\ 8]{Pinn}, where he writes $\lfloor 2^{g+1/2} \rfloor$.}

\begin{figure}[!hbtp]
\begin{center}
\includegraphics[scale=.5]{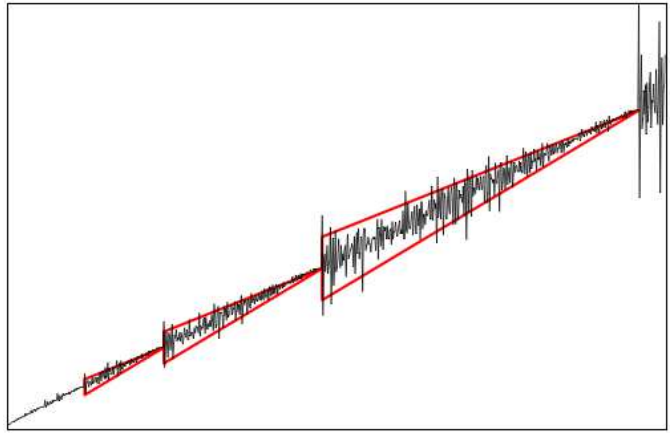}
\caption{Graph of $Q(n)$ for $1 \leq n \leq 800$.} \label{fig:Hof}
\end{center}
\end{figure}

As we now show, our spot-based maternal generation structure for $Q(n)$ does a much better job than Pinn's statistical estimation techniques at identifying the locations of the transitions in $Q(n)$, at least for all the first 18 generations that we have checked. Let $\alpha(g,\pi)$ denote the start point of the $g^{th}$ Pinn generation, while $\alpha_1(g)$ denotes the start point of the $g^{th}$ maternal generation of $Q(n)$ based on spot $n-Q(n-1)$. In Table \ref{tbl:Pinn_mat_comp} we compare the start points for the first 20 maternal generations of $Q(n)$ with those for the first 20 Pinn generations.

\begin{table}[!ht]
\fontsize{10}{10}\selectfont
\caption{Comparison of start points for maternal generations with Pinn's generations.}\label{tbl:Pinn_mat_comp}
\center{
\begin{tabular}{lr r r r r r r r r r|}
    \hline
    {}&\multicolumn{4}{c}{maternal generation $g$}&{}&\multicolumn{4}{c}{Pinn generation $g$}\\
    \cline{2-5}\cline{7-10}
    {}&1&2&3&4&{}&1&2&3&4\\
    \hline
$\alpha_1(g+0)$&1&3&6&12&$\alpha(g+0; \pi)$&1&3&6&12\\
$\alpha_1(g+4)$&24&48&96&192&$\alpha(g+4; \pi)$&23&48&96&192\\
$\alpha_1(g+8)$&384&768&1522&3031&$\alpha(g+8; \pi)$&384&768&1522&2896\\
$\alpha_1(g+12)$&6043&12056&24086&48043&$\alpha(g+12; \pi)$&5792&11585&23170&46340\\
$\alpha_1(g+16)$&95286&189268&376996&750285&$\alpha(g+16; \pi)$&92681&185363&370727&741455\\
\hline
\end{tabular}
}
\end{table}

From Table \ref{tbl:Pinn_mat_comp} we see that the start points match for the first eleven maternal and Pinn generations, respectively. This is very surprising, since the two approaches for identifying these points are entirely unrelated. For the remaining generations there are substantial differences in the start points, with the maternal generation start point bigger in every case.

\begin{table}[!ht]
\fontsize{10}{10}\selectfont
\caption{Deviations in $Q(n)$ at start points of maternal and Pinn generations.  The last column shows
the transition points for $Q(n)-n/2$.}\label{tbl:Q-deviations}
\center{
\begin{tabular}{lc c c c c cl} \hline
$g$&$\alpha_1(g)$&Absolute&$\alpha(g,\pi)$&Absolute&Transition\\
&&deviation (\%)&&deviation (\%)&points\\\hline
12&3031&9.48&2896&0.68&3032\\
13&6043&1.52&5792&0.48&6042\\
14&12056&1.00&11585&0.22&12069\\
15&24086&5.72&23170&0.46&24064\\
16&48043&0.42&46340&1.00&48013\\
17&95286&2.60&92681&0.36&95182\\
18&189268&1.73&185363&0.28&189266\\\hline
\end{tabular}
}
\end{table}

In Table \ref{tbl:Q-deviations} we calculate the absolute percent deviation of $Q(\alpha_1(g))$ from its previous value $Q(\alpha_1(g) -1)$, that is, the value $\frac{|Q(\alpha_1(g)) - Q(\alpha_1(g) - 1)|}{Q(\alpha_1(g) - 1)} \times 100\%$, for maternal generations 12 to 18. We then compare that with the corresponding absolute deviation for $Q(\alpha(g, \pi))$. Note how these deviations are almost always higher for the maternal generation start point, suggesting that these points are more closely located to the upcoming transition point. In the last column of Table \ref{tbl:Q-deviations} we locate the actual transition points of $Q(n)$.\footnote{This is done via a careful examination of the behavior of the sequence $Q(n)-n/2$. The interested reader may contact us for details.} We observe that for each generation $g$ from 12 through 18, $\alpha_1(g)$ is much closer to the transition point for generation $g$ than $\alpha(g, \pi)$. For example, for $g=14$, $\alpha_1(14) = 12,056$, $\alpha(14,\pi)=11,585$, and we estimate the transition point to be $12,069$ (see Figure \ref{fig:4}; notice that Pinn's start point for generation 14 is located in the midst of a relatively quiet portion of the values of $Q(n)-n/2$, while the start point for maternal generation 14 is much closer to the upcoming spike in $Q(n)$ situated at the end of this quiet region).

\begin{figure}[!ht]
\begin{center}
\includegraphics[scale=0.5]{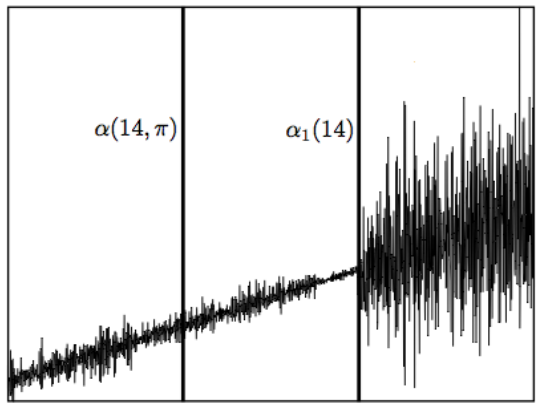}
\caption{Comparison of $\alpha_{1}(14)=12056$ and $\alpha(14;\pi)=11585$ on the graph of $Q(n)$} \label{fig:4}
\end{center}
\end{figure}

Further investigation is required to determine for how many generations beyond 18 this apparent connection between the start point of the maternal generations and the transition points for $Q(n)$ persists, and to understand its significance. But this tantalizing property of the maternal generation function for $Q(n)$ provides a further suggestion that a close examination of spot-based generation structures may provide potentially important insights into the intrinsic structure of many meta-Fibonacci sequences, including those with highly complex behavior such as $Q(n)$.

\section*{Acknowledgements} \label{ack}
We thank Brian Choi and Sahir Haider for their comments on the $\mu$ sequence. We would also like to acknowledge the computational assistance of Biao Zhou and Yin Xu on the $Q$-sequence.

\end{document}